\numberwithin{equation}{section}
\def\D{{\mathbb D}}  
\def\C{{\mathbb C}}  \def\N{{\mathbb N}}
\def\({\left(}       \def\){\right)}
\newtheorem{theorem}{Theorem}[section]
\newtheorem{proposition}[theorem]{Proposition}
\theoremstyle{definition}
\newtheorem{definition}[theorem]{Definition}
\theoremstyle{remark}
\numberwithin{equation}{section}
\begin{document}
%%% Title
\title[Mixed norm spaces and $RM(p,q)$ spaces]{Mixed norm spaces and $RM(p,q)$ spaces}
%%% Information for the first author
\author[T. Aguilar-Hern\'andez ]{Tanaus\'u Aguilar-Hern\'andez}
\address{Departamento de Matem\'atica Aplicada II and IMUS, Escuela T\'ecnica Superior de Ingenier\'ia, Universidad de Sevilla,
Camino de los Descubrimientos, s/n 41092, Sevilla, Spain}
\email{taguilar@us.es}

\subjclass[2010]{Primary 30H20; Secondary 46E15, 46B10}

\date{\today}

\keywords{Mixed norm spaces, radial integrability, Bergman projection}

\thanks{{This research was supported in part by Ministerio de Econom\'{\i}a y Competitividad, Spain, MTM2015-63699-P,  and Junta de Andaluc{\'i}a, FQM133.}}

\maketitle

\begin{abstract}
In this paper we present the containment relationship between the spaces of analytic functions with average radial integrability  $RM(p,q)$ and a family of mixed norm spaces.
\end{abstract}

%\tableofcontents 

\section{Introduction}

%In mostly of the cases, 
The belonging of a function to a certain Banach space of analytic functions is usually given in terms of boundedness (or integrability) of a certain average of the function on circles centered at the origin or in terms of the integrability with respect to the Lebesgue area measure, maybe with a certain weight (see, i.e., \cite{duren_theory_2000,garnett_bounded_2007,HKZ,vukotic_multiplier_2016}).   The most classical examples in this situation are the Hardy spaces but also the mixed norm spaces $H^{q,p}$ defined explicitly by Flett in \cite{Flett_1,Flett_2} and, nowadays, widely studied  (see \cite{vukotic_multiplier_2016}).  Let us recall that a holomorphic function $f$ in the unit disc belongs to $H^{q,p}$ if 
\begin{equation}\label{Eq:mixta}
\left(\int_{0}^{1} \left( \int_{0}^{2\pi}|f(re^{i\theta})|^{q}\ \frac{d\theta}{2\pi}\right)^{p/q}\ dr\right)^{1/p}<+\infty.
\end{equation}

In other less studied cases, the belonging to a Banach space of analytic functions is determined by the average radial integrability. Maybe the most well-known space in this situation is the spaces of bounded radial variation $BRV$, a topic that goes back to Zygmund and where many different authors have work (see, i.e., \cite{Bourgain,rudin_1955,zygmund_1944}). This space of analytic function with bounded radial variation consists of those holomorphic functions $g\in\mathcal{H}(\D)$ such that 
\begin{align*}
\sup_{\theta}\int_{0}^{1} |g'(te^{i\theta})|\ dt<+\infty.
\end{align*}

Other different situation where the radial integrability plays an important role is in the Féjer-Riesz theorem which says that if $f$ belongs to the Hardy space $H^p$ then
\begin{align*}
\sup_{\theta}\left(\int_{0}^{1} |f(re^{i\theta})|^{p}\ dr\right)\leq \frac{1}{2}\|f\|_{H^p}^{p}.
\end{align*}

Recently, a family of spaces of holomorphic functions in the unit disc with average radial integrability, denoted by $RM(p,q)$, has been studied in \cite{Aguilar-Contreras-Piazza_1, Aguilar-Contreras-Piazza_2}. These spaces are formed by the analytic functions such that 
\begin{equation}\label{Eq:radial}
\left(\int_{0}^{2\pi}\left(\int_{0}^{1} |f(re^{i\theta})|^{p}\ dr\right)^{q/p}\ \frac{d\theta}{2\pi}\right)^{1/q}<+\infty
\end{equation}
for $0<p,q<+\infty$. If either $p$ or $q$ is infinity, we change the integral by the essential supremum, respectively. 
 This family contains the classical Hardy spaces $H^q$ (when $p=+\infty$) and Bergman spaces $A^p$ (when $p=q$). 
 
 Looking at \eqref{Eq:mixta} and \eqref{Eq:radial}, a natural question that has been raised is the containment relationship with the mixed norm spaces $H^{q,p}$ and $RM(p,q)$. Notice that by Fubini's theorem, it is clear that
$RM(p,p)= H^{p,p}$ for all $1<p<\infty$. 
 The main result of this paper will provide an answer to the above question:

\begin{theorem} \label{main} Let $1< p,q< +\infty$.
	\begin{enumerate}
		\item[a)] If $p>q$, then $RM(p,q)\varsubsetneq H^{q,p}$.
		\item[b)] If $q>p$, then $H^{q,p} \varsubsetneq RM(p,q)$.
	\end{enumerate}
\end{theorem}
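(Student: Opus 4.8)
The inclusions themselves are a direct manifestation of Minkowski's integral inequality, so the plan is to dispatch them first and then to concentrate on the strictness, which is where the real work lies. Observe that both norms assign the exponent $p$ to the radial variable $r$ and the exponent $q$ to the angular variable $\theta$; they differ only in the order of integration, $RM(p,q)$ integrating in $r$ first and $H^{q,p}$ integrating in $\theta$ first. The generalized Minkowski inequality states that, among two iterated norms assigning the same exponents to the same variables, the one that integrates on the inside the variable carrying the \emph{smaller} exponent yields the smaller value. In case a), with $q<p$, the smaller exponent $q$ sits on $\theta$, which is integrated first precisely in $H^{q,p}$; hence $\|f\|_{H^{q,p}}\le \|f\|_{RM(p,q)}$ and $RM(p,q)\subseteq H^{q,p}$. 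In case b), with $p<q$, the smaller exponent $p$ sits on $r$, integrated first in $RM(p,q)$, so $\|f\|_{RM(p,q)}\le\|f\|_{H^{q,p}}$ and $H^{q,p}\subseteq RM(p,q)$.

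It remains to prove that these inclusions are proper, and this is the heart of the matter. The natural first guess, the power singularity $f(z)=(1-z)^{-\alpha}$, turns out to be useless: using $|1-re^{i\theta}|\asymp(1-r)+|\theta|$ one finds that the inner $\theta$-integral behaves like $(1-r)^{1-\alpha q}$ and the inner $r$-integral like $|\theta|^{1-\alpha p}$, and a short calculation shows that \emph{both} $\|f\|_{H^{q,p}}$ and $\|f\|_{RM(p,q)}$ are finite under exactly the same condition $\alpha<\tfrac1p+\tfrac1q$. The two spaces therefore cannot be separated away from the critical exponent, and the separation must be extracted at criticality by means of a logarithmic refinement.

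Accordingly I would test the inclusions against the family
\[
f_{\gamma}(z)=(1-z)^{-(1/p+1/q)}\left(\log\frac{e}{1-z}\right)^{-\gamma},
\]
which is holomorphic in $\D$ for every real $\gamma$ (both factors admit a holomorphic branch since on $\D$ the quantities $1-z$ and $\log\frac{e}{1-z}$ lie in the right half-plane and omit $0$). Setting $\alpha=\tfrac1p+\tfrac1q$ gives $\alpha q=1+q/p>1$ and $\alpha p=1+p/q>1$, so the inner integrals are genuinely singular and the logarithm is essentially constant over the range that dominates them. The resulting asymptotics are
\[
\int_0^{2\pi}|f_\gamma(re^{i\theta})|^q\,\tfrac{d\theta}{2\pi}\asymp (1-r)^{-q/p}\Big(\log\tfrac1{1-r}\Big)^{-\gamma q},\qquad \int_0^{1}|f_\gamma(re^{i\theta})|^p\,dr\asymp |\theta|^{-p/q}\Big(\log\tfrac1{|\theta|}\Big)^{-\gamma p}.
\]
Raising the first to the power $p/q$ and integrating in $r$ shows $f_\gamma\in H^{q,p}\iff \gamma p>1$; raising the second to the power $q/p$ and integrating in $\theta$ shows $f_\gamma\in RM(p,q)\iff \gamma q>1$. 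Thus the two membership thresholds are $\gamma>1/p$ and $\gamma>1/q$ respectively.

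With these thresholds the strictness is immediate. In case a) ($p>q$, so $1/p<1/q$) I take $\gamma=1/q$: then $\gamma p=p/q>1$ while $\gamma q=1$, so $f_\gamma\in H^{q,p}\setminus RM(p,q)$, proving $RM(p,q)\varsubsetneq H^{q,p}$. In case b) ($q>p$, so $1/q<1/p$) I take $\gamma=1/p$: then $\gamma q=q/p>1$ while $\gamma p=1$, so $f_\gamma\in RM(p,q)\setminus H^{q,p}$, proving $H^{q,p}\varsubsetneq RM(p,q)$. The main obstacle is not the construction but the verification of the two displayed equivalences: one must control the inner integrals uniformly, splitting the range at $|\theta|\asymp 1-r$ (respectively at $1-r\asymp|\theta|$) and checking that, because $\alpha q>1$ and $\alpha p>1$, the singular end of the range dominates so that the slowly varying logarithmic factor may be frozen at its value there. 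Everything else reduces to one-variable integrals of the type $\int u^{-1}(\log\tfrac1u)^{-c}\,du$, whose convergence is governed precisely by whether $c>1$.
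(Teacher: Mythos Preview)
Your proposal is correct, and the route is genuinely different from the paper's. For the inclusions you and the paper agree (Minkowski). For the strictness, the paper does not produce a single separating function: instead it builds, for each $m$, a finite sum $F_m=\sum_{n=1}^m u_{\delta_n}(ze^{-i\theta_n})$ of rotated critical kernels $u_\delta(z)=\delta(1+\delta-z)^{-(1+1/p+1/q)}$, localizes them on sets $A_n$ that are pairwise disjoint in \emph{both} the $r$- and $\theta$-variables, and shows that the error outside $A_n$ is summable. Disjointness then forces $\|F_m\|_{H^{q,p}}\asymp m^{1/p}$ while $\rho_{p,q}(F_m)\asymp m^{1/q}$, so the two norms cannot be equivalent when $p\neq q$; this settles a), and b) is deduced from a) by the dualities $(H^{q,p})^{*}\cong H^{q',p'}$ and $(RM(p,q))^{*}\cong RM(p',q')$.

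Your logarithmic refinement $f_\gamma(z)=(1-z)^{-(1/p+1/q)}\bigl(\log\tfrac{e}{1-z}\bigr)^{-\gamma}$ is simpler and more explicit: it yields a single holomorphic witness in $H^{q,p}\setminus RM(p,q)$ (resp.\ $RM(p,q)\setminus H^{q,p}$), treats a) and b) symmetrically, and avoids both the disjoint-support bookkeeping and the duality machinery. The paper's argument, on the other hand, yields the quantitative scale $m^{1/p}$ versus $m^{1/q}$, i.e.\ the precise rate at which the norm comparison fails, and sidesteps the slowly-varying-function estimates that you (rightly) flag as the technical point still to be written out.
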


Throughout the paper the letter $C=C(\cdot)$ will denote an absolute constant whose value depends on the parameters indicated in the parenthesis, and may change from one occurrence to another. We will use the notation $a\lesssim b$ if there exists a constant $C=C(\cdot)>0$ such that $a\leq C b$, and $a\gtrsim b$ is understood in an analogous manner. In particular, if $a\lesssim b$ and $a\gtrsim b$, then we will write $a\asymp b$.

%------------------------------------------------------------------------------------
\section{First definitions}
We start by recalling the definition of the spaces with average radial integrability $RM(p,q)$. These spaces are formed by those holomorphic functions in $\D$ such that taking the $p$-norm in every radius and then the $q$-norm, the result is a finite number.
More precisely, we give the following definition.

\begin{definition}
	Let $1\leq <p,q< +\infty$. We define the spaces of analytic functions $$RM(p,q)=\left\{f\in\mathcal{H}(\D)\ :\rho_{p,q}(f)<+\infty\right\}$$
	where
		\begin{equation*}
	\begin{split}
	\rho_ {p,q}(f)&=\left(\frac{1}{2\pi}\int_{0}^{2\pi} \left(\int_{0}^{1} |f(r e^{i t})|^p \ dr \right)^{q/p}dt\right)^{1/q}, \quad \text{ if } p,q<+\infty.
%	\\
%	\rho_ {p,\infty}(f)&=\esssup_ {t\in[0,2\pi)}\left(\int_{0}^{1} |f(r e^{i t})|^p \ dr \right)^{1/p}, \quad \text{ if } p<+\infty, \\
%	\rho_ {\infty,q}(f)&=\left(\frac{1}{2\pi}\int_{0}^{2\pi} \left(\sup_{r\in [0,1)} |f(r e^{i t})| \right)^{q}dt\right)^{1/q},\quad\text{ if } q<+\infty,\quad \text{and}\\
%	\rho_{\infty,\infty}(f)&=\|f\|_{H^{\infty}}.
	\end{split}
	\end{equation*}
\end{definition}

	It is easy to see that $RM(p,q)$ endowed with the norm $\rho_{p,q}$ is a Banach space whenever $p,q\geq 1$.

One example of functions in $RM(p,q)$, that we will play an important role in the main result of this work, is the following family of functions.

\begin{proposition}\label{testfunction1} Let $0< p,q\leq +\infty$. Let $\alpha\in \D$ and $\beta>\frac{1}{p}+\frac{1}{q}$ then
	\begin{align*}
	\rho_{p,q}((1-\overline{\alpha}z)^{-\beta})\asymp (1-|\alpha|)^{\frac{1}{p}+\frac{1}{q}-\beta},
	\end{align*}
	where we are using the main branch of the logarithm to define $w^{-\beta}$.
	We underline that the equivalent constants depend on $p$,$q$ and $\beta$, but not on $\alpha$.
\end{proposition}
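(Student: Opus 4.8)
The plan is to evaluate the iterated integral defining $\rho_{p,q}$ directly, reducing everything to two elementary one-variable integral estimates. First I would remove the argument of $\alpha$: writing $\alpha=|\alpha|e^{i\psi}$ and changing variables $t\mapsto t+\psi$ in the angular integral (legitimate by $2\pi$-periodicity, and harmless to the inner radial integral since $|1-\overline\alpha r e^{it}|=|1-|\alpha|\,r e^{i(t-\psi)}|$) shows $\rho_{p,q}((1-\overline\alpha z)^{-\beta})=\rho_{p,q}((1-|\alpha|z)^{-\beta})$. So I set $a=|\alpha|\in[0,1)$ and $\delta=1-a$. If $a\le 1/2$, then $|1-are^{it}|\asymp 1$ for $r\in[0,1]$, whence $\rho_{p,q}((1-az)^{-\beta})\asymp 1\asymp\delta^{1/p+1/q-\beta}$; hence I may assume $a\ge 1/2$, i.e. $\delta\le 1/2$. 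The workhorse is the identity $|1-a r e^{it}|^{2}=(1-ar)^{2}+4ar\sin^{2}(t/2)$, which, using $\sin^{2}(t/2)\asymp t^{2}$ on $[-\pi,\pi]$ and splitting into $r\ge 1/2$ (where $ar\asymp 1$) and $r\le 1/2$ (where $1-ar\asymp 1$), yields the two-sided comparison
\begin{equation*}
|1-a r e^{it}|\asymp (1-ar)+|t|,\qquad r\in[0,1],\ t\in[-\pi,\pi].
\end{equation*}

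Next I would compute the inner radial integral. Raising the comparison to the power $-\beta p$, substituting $u=1-ar$ (so $du=-a\,dr$ with $a\asymp 1$), and writing $s=|t|$, I get
\begin{equation*}
\int_{0}^{1}|1-a r e^{it}|^{-\beta p}\,dr\asymp \int_{\delta}^{1}\frac{du}{(u+s)^{\beta p}}.
\end{equation*}
Since $\beta>\tfrac1p+\tfrac1q>\tfrac1p$ forces $\beta p>1$, the primitive is $\tfrac{(u+s)^{1-\beta p}}{1-\beta p}$, and the singular endpoint dominates: the bounded term $(1+s)^{1-\beta p}$ is only a fixed proportion of $(\delta+s)^{1-\beta p}$ (because $\tfrac{\delta+s}{1+s}$ is bounded away from $1$ uniformly for $\delta\le 1/2$, $s\le\pi$). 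This gives
\begin{equation*}
\int_{0}^{1}|1-a r e^{it}|^{-\beta p}\,dr\asymp (\delta+|t|)^{1-\beta p}.
\end{equation*}

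Raising this to the power $q/p$ and integrating in $t$ over $[-\pi,\pi]$ (by symmetry, twice the integral over $[0,\pi]$) reduces the whole quantity to
\begin{equation*}
\rho_{p,q}\big((1-az)^{-\beta}\big)^{q}\asymp \int_{0}^{\pi}(\delta+t)^{\mu}\,dt,\qquad \mu:=\tfrac{q}{p}(1-\beta p).
\end{equation*}
The hypothesis $\beta>\tfrac1p+\tfrac1q$ is precisely $\mu+1=q\big(\tfrac1p+\tfrac1q-\beta\big)<0$, so once more the singular endpoint dominates and $\int_{0}^{\pi}(\delta+t)^{\mu}\,dt\asymp\delta^{\mu+1}$. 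Therefore $\rho_{p,q}((1-az)^{-\beta})^{q}\asymp\delta^{\,q(1/p+1/q-\beta)}$, and taking $q$-th roots yields $\rho_{p,q}((1-\overline\alpha z)^{-\beta})\asymp(1-|\alpha|)^{1/p+1/q-\beta}$, with constants depending only on $p,q,\beta$.

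The only delicate point—and the one I would write out in full—is the two ``singular-endpoint-dominates'' estimates $\int_{\delta}^{1}(u+s)^{-\gamma}\,du\asymp(\delta+s)^{1-\gamma}$ (for $\gamma>1$) and $\int_{0}^{\pi}(\delta+t)^{\mu}\,dt\asymp\delta^{\mu+1}$ (for $\mu<-1$). Each is a difference of two powers, and one must verify that the bounded term is swallowed by the blowing-up term \emph{uniformly} in the relevant range; this is exactly where the restriction $\delta\le 1/2$ (i.e. $a\ge 1/2$) enters, as it keeps the pertinent ratios bounded away from $1$ and so keeps the constants independent of $\alpha$. Everything else is bookkeeping. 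Finally, the endpoint cases $p=\infty$ and/or $q=\infty$ follow from the same pointwise comparison: the inner radial supremum equals $\big(\inf_{0<r<1}|1-are^{it}|\big)^{-\beta}\asymp(\delta+|t|)^{-\beta}$, after which one repeats the outer computation (or takes the supremum in $t$), recovering the exponent $\tfrac1p+\tfrac1q-\beta$ under the convention $1/\infty=0$.
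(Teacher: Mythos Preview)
Your argument is correct. Both your proof and the paper's rest on the same pointwise estimate for $|1-a r e^{it}|$ and then evaluate the iterated integral; the difference is purely organizational. The paper splits into regions $|\theta|<1-\alpha$ versus $|\theta|>1-\alpha$ (and within the latter, $r$ large versus $r$ small) to replace $|1-\alpha r e^{i\theta}|$ by $1-\alpha r$ or by $|\theta|$ case by case, whereas you package the same information as the single global comparison $|1-a r e^{it}|\asymp(1-ar)+|t|$ and then integrate twice in closed form. Your version is tidier and avoids the case analysis; the paper's version is more explicit about the constants at each step. Either way the mechanism is identical, and your handling of the two ``singular-endpoint-dominates'' steps (where the restriction $\delta\le 1/2$ is used to keep the ratios $(\delta+s)/(1+s)$ and $\delta/(\delta+\pi)$ bounded away from $1$) is exactly what is needed to make the constants independent of $\alpha$.
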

\begin{proof}
	Let $0<p,q<+\infty$. We can assume without loss of generality that $\alpha\in [0,1)$. Moreover, we can assume that $1/2\leq \alpha<1$.
	
	Let us estimate the quantity $|1-\alpha re^{i\theta}|^2$ for points around $1$. If $0<\theta<1-\alpha$ and $1/2<r<1$, then $|1-\alpha re^{i\theta}|^2\asymp (1-r\alpha)^2$. If $1/2>\theta>1-\alpha$, then
	\begin{align*}
	|1-\alpha re^{i\theta}|^2\asymp 
	\begin{cases}
	\theta^2,  & 1>r\geq \frac{1-\theta}{\alpha},\\
	(1-r\alpha)^2, & 1/2< r\leq \frac{1-\theta}{\alpha}.\\
	\end{cases}
	\end{align*}
	
	First of all, let us see that $\rho_{p,q}((1-\overline{\alpha}z)^{-\beta})\lesssim (1-|\alpha|)^{\frac{1}{p}+\frac{1}{q}-\beta}$ if $\beta>\frac{1}{p}+\frac{1}{q}$. Using the symmetry in $\theta$ and the monotonicity in $\theta$ and $r$, we have
	
	\begin{align*}
	&\int_{0}^{2\pi}\left(\int_{0}^{1} \frac{dr}{|1-\alpha re^{i\theta}|^{\beta p}}\right)^{q/p}\ d\theta  \leq 2^{2+q/p}\pi \int_{0}^{1/2}\left(\int_{1/2}^{1} \frac{dr}{|1-\alpha re^{i\theta}|^{\beta p}}\right)^{q/p}\ d\theta\\
	\end{align*}
	Therefore,
	\begin{align*}
	&\int_{0}^{2\pi}\left(\int_{0}^{1} \frac{dr}{|1-\alpha re^{i\theta}|^{\beta p}}\right)^{q/p}\ d\theta\\
	&\quad\lesssim\left(\int_{0}^{1-\alpha}\left(\int_{1/2}^{1} \frac{dr}{|1-\alpha re^{i\theta}|^{\beta p}}\right)^{q/p}\ d\theta+\int_{1-\alpha}^{1/2}\left(\int_{1/2}^{1} \frac{dr}{|1-\alpha re^{i\theta}|^{\beta p}}\right)^{q/p}\ d\theta\right)\\
	&\quad\leq\int_{1-\alpha}^{1/2}\left(\int_{\frac{1-\theta}{\alpha}}^{1} \frac{dr}{\theta^{\beta p}}+\int_{1/2}^{\frac{1-\theta}{\alpha}} \frac{dr}{(1-r\alpha)^{\beta p}}\right)^{q/p}\ d\theta+\int_{0}^{1-\alpha}\left(\int_{1/2}^{1} \frac{dr}{(1-r\alpha)^{\beta p}}\right)^{q/p}\ d\theta\\
	&\quad\leq  \int_{1-\alpha}^{1/2}\left( \frac{\theta-(1-\alpha)}{\alpha\theta^{\beta p}}+\frac{1}{\alpha(\beta p-1)}\left(\frac{1}{\theta^{\beta p-1}}-\frac{1}{(1-\alpha/2)^{\beta p-1}}\right)\right)^{q/p}\ d\theta\\
	&\qquad+\int_{0}^{1-\alpha}\left(\frac{1}{\alpha(\beta p-1)}\left(\frac{1}{(1-\alpha)^{\beta p-1}}-\frac{1}{(1-\alpha/2)^{\beta p-1}}\right)\right)^{q/p}\ d\theta \displaybreak\\
	&\quad\leq \left(\frac{\beta p}{\alpha(\beta p-1)}\right)^{q/p} \left(\int_{1-\alpha}^{1/2} \frac{1}{\theta^{\beta q-q/p}}\ d\theta +\frac{1}{(1-\alpha)^{\beta q-q/p-1}}\right) \\
	& \quad\leq \left(\frac{\beta p}{\alpha(\beta p-1)}\right)^{q/p} \left(\frac{\beta q-q/p}{\beta q-q/p-1}\right)(1-\alpha)^{1+q/p-\beta q}.
	\end{align*}
	%If $\beta=\frac{1}{p}+\frac{1}{q}$, then 
	%\begin{align*}
	%&\int_{0}^{2\pi}\left(\int_{0}^{1} \frac{dr}{|1-\alpha re^{i\theta}|^{\beta p}}\right)^{q/p}\ d\theta \leq 2^{2+q/p}\pi \left(\frac{\beta p}{\alpha(\beta p-1)}\right)^{q/p} \left(\int_{1-\alpha}^{1/2} \frac{1}{\theta}\ d\theta +1\right)\\
	%&\leq    2^{2+q/p}\pi \left(\frac{\beta p}{\alpha(\beta p-1)}\right)^{q/p} \ln\left(\frac{e}{2(1-\alpha)}\right).
	%\end{align*}
	
	Now, we will show that $\rho_{p,q}((1-\overline{\alpha}z)^{-\beta})\gtrsim (1-|\alpha|)^{\frac{1}{p}+\frac{1}{q}-\beta}$ if $\beta>\frac{1}{p}+\frac{1}{q}$.
	Since for $0<\theta<1-\alpha$ and $0\leq r<1$ one have that $|1-\alpha re^{i\theta}|\lesssim (1-\alpha r)$, we have
	
	\begin{align*}
	\int_{0}^{2\pi}\left(\int_{0}^{1} \frac{dr}{|1-\alpha re^{i\theta}|^{\beta p}}\right)^{q/p}\ d\theta & \gtrsim \int_{0}^{1-\alpha}\left(\int_{0}^{1} \frac{dr}{(1-\alpha r)^{\beta p}}\right)^{q/p}\ d\theta\\
	&\geq \frac{1}{(\beta p-1)^{q/p}\alpha^{q/p}}\int_{0}^{1-\alpha}\left(\frac{1}{(1-\alpha)^{\beta p-1}}-1\right)^{q/p} d\theta\\
	&\geq \left(\frac{1-(1/2)^{\beta p-1}}{(\beta p-1)\alpha}\right)^{q/p} (1-\alpha)^{1+q/p-\beta q}.
	\end{align*}
	
	If $p$ or $q$ is $\infty$, the proof follows similarly.
\end{proof}

Now, we continue with the definition of a particular case of the mixed normed spaces. Bearing in mind the $RM(p,q)$ spaces, they are defined interchanging the order of integration, that is, they are formed by those holomorphic functions in $\D$ such that taking the $q$-norm in every circle and then the $p$-norm, the result is a finite number. More precisely, 

\begin{definition}
	Let $0<p,q< +\infty$. We define the mixed norm spaces
	\begin{align*}
	H^{q,p}=\{f\in\mathcal{H}(\D)\ :\ \|f\|_{H^{q,p}}<+\infty\}
	\end{align*}
	where
	\begin{align*}
	&\|f\|_{H^{q,p}}=\left(\int_{0}^{1}\left(\int_{0}^{2\pi} |f(re^{i\theta})|^{q}\ \frac{d\theta}{2\pi}\right)^{p/q}\ dr\right)^{1/p}.
		\end{align*}
\end{definition}
%Although these integral expressions appeared firstly in the Hardy and Littlewood's paper on properties of the integral mean \cite{Hardy Littlewood}, these spaces were defined explicitly by Flett in \cite{Flett_1,Flett_2}. Since then, these spaces have been studied by many authors (see \cite{vukotic_multiplier_2016}).

%Notice that for $\alpha=1/p$,
%\begin{align*}
%\|f\|_{H^{q,p,1/p}}=\left(\int_{0}^{1} \left( \int_{0}^{2\pi}|f(re^{i\theta})|^{q}\ \frac{d\theta}{2\pi}\right)^{p/q}\ dr\right)^{1/p}
%\end{align*}
%and, 
By Fubini's theorem, it is clear that
$$\|f\|_{H^{p,p}}=\rho_{p,p}(f).$$
Thus, it is a natural question to analyse what happens when $p\neq q$. This is the aim of this work. 

Along the same lines as for the $RM(p,q)$ spaces, we have one example of functions that belong to these mixed norm spaces is the following family of analytic functions.
\begin{proposition}{\cite[Proposition 2, p. 947]{Arevalo}}\label{testfunction2} Let $0< p,q<+\infty$. Let $\alpha\in \D$ and $\beta>\frac{1}{p}+\frac{1}{q}$ then
	\begin{align*}
	\|((1-\overline{\alpha}z)^{-\beta})\|_{H^{q,p}}\asymp (1-|\alpha|)^{\frac{1}{p}+\frac{1}{q}-\beta},
	\end{align*}
	where we are using the main branch of the logarithm to define $w^{-\beta}$.
	We underline that the equivalent constants depend on $p$,$q$ and $\beta$, but not on $\alpha$.
\end{proposition}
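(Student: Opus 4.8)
The plan is to follow the same scheme as in the proof of Proposition~\ref{testfunction1}, with one simplification: in $H^{q,p}$ the inner integration is over the full circle, so the angular integral is an integral mean of $(1-\overline{\alpha}z)^{-\beta}$, to which a sharp classical asymptotic applies directly. By rotation invariance I may assume $\alpha\in[0,1)$, and since for $\alpha$ bounded away from $1$ both sides are comparable to a fixed positive constant, it suffices to treat $1/2\le\alpha<1$. Note that then $1-\alpha r\ge 1-\alpha>0$ for every $r\in[0,1]$, so there is no interior singularity to worry about.

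First I would recall the well-known estimate (see, e.g., \cite{HKZ})
\[
\int_{0}^{2\pi}\frac{d\theta}{|1-\alpha re^{i\theta}|^{s}}\asymp (1-\alpha r)^{1-s},\qquad s>1,
\]
with constants depending only on $s$. The hypothesis $\beta>\frac1p+\frac1q$ gives $\beta q>1+\frac{q}{p}>1$, so taking $s=\beta q$ keeps us always in the power regime (the logarithmic borderline $s=1$ never occurs). Consequently the inner $q$-mean satisfies
\[
\left(\int_{0}^{2\pi}\frac{1}{|1-\alpha re^{i\theta}|^{\beta q}}\ \frac{d\theta}{2\pi}\right)^{p/q}\asymp (1-\alpha r)^{\frac{p}{q}-\beta p}.
\]

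It then remains to integrate in $r$. Substituting $u=1-\alpha r$,
\[
\|(1-\overline{\alpha}z)^{-\beta}\|_{H^{q,p}}^{p}\asymp\int_{0}^{1}(1-\alpha r)^{\frac{p}{q}-\beta p}\,dr=\frac1\alpha\int_{1-\alpha}^{1}u^{\frac{p}{q}-\beta p}\,du.
\]
Here the exponent $\frac{p}{q}-\beta p<-1$ (again by $\beta>\frac1p+\frac1q$), so the exact evaluation $\int_{1-\alpha}^{1}u^{a}\,du=\frac{(1-\alpha)^{a+1}-1}{|a+1|}$ with $a+1<0$ shows the integral is comparable to $(1-\alpha)^{1+\frac{p}{q}-\beta p}$ as $\alpha\to1$; since $\alpha\asymp1$ the factor $1/\alpha$ is harmless. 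Taking $p$-th roots gives $(1-\alpha)^{\frac1p+\frac1q-\beta}$, the asserted equivalence.

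The computation is essentially two lines once the integral-mean asymptotic is granted, so the only genuine content, and the step I would be most careful about, is the verification that the two exponent conditions $\beta q>1$ and $\frac{p}{q}-\beta p<-1$ both follow from $\beta>\frac1p+\frac1q$. The first places the angular integral strictly in the power regime, avoiding any logarithmic term; the second guarantees that the radial integral concentrates near $r=1$, where $1-\alpha r\asymp 1-\alpha$, so that the size is governed by $1-\alpha$ rather than by the behaviour near $r=0$. Both inequalities are immediate, and no separate upper/lower bound arguments are needed: the exact evaluation of the elementary radial integral already yields the two-sided estimate, exactly as in Proposition~\ref{testfunction1}.
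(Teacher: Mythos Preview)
Your argument is correct. The two exponent checks $\beta q>1$ and $p/q-\beta p<-1$ both follow immediately from $\beta>1/p+1/q$, the Forelli--Rudin type estimate from \cite{HKZ} gives the sharp two-sided asymptotic for the angular mean, and the radial integral is then an elementary power whose exact evaluation delivers both bounds at once. The reduction to $\alpha\in[1/2,1)$ is also fine, since for $|\alpha|\le 1/2$ one has $|1-\overline{\alpha}z|\asymp 1$ uniformly on $\D$ and $(1-|\alpha|)^{1/p+1/q-\beta}\asymp 1$.

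Note, however, that the paper does not actually prove this proposition: it is stated with a citation to \cite[Proposition~2]{Arevalo} and no proof is given. So there is no ``paper's own proof'' to compare against here. What can be said is that your approach is considerably shorter than the paper's proof of the companion result, Proposition~\ref{testfunction1}, for $RM(p,q)$: there the inner integral is radial, so no off-the-shelf asymptotic is available and the paper resorts to splitting into regions and estimating upper and lower bounds separately. In the $H^{q,p}$ setting the inner integral is the full angular mean, which is exactly what the classical lemma controls, and this is precisely the simplification you exploit. One small quibble: your closing remark ``exactly as in Proposition~\ref{testfunction1}'' is a bit misleading, since that proof does carry out separate upper and lower estimates; the point you presumably intend is that, as there, the radial integral is evaluated exactly rather than merely bounded.
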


\section{Containment relationships}
We proceed with the presentation of the containment relationship between the $RM(p,q)$ spaces and a particular case of the mixed norm spaces, which we have defined above.

Bearing in mind the standard argument of \cite[Corollary 4.8, p. 29]{Aguilar-Contreras-Piazza_1}, one can prove an analogous duality result for the mixed norm spaces by means of the boundedness of the weighted Bergman projection:

\begin{proposition}{\cite[Corollary 7.3.4, p. 153]{vukotic_multiplier_2016}}\label{vukotic}
	Let $1\leq p, q< +\infty$ and $1/p<\gamma+1$. Then the operator
	\begin{align*}
	P_{\gamma}f(z)=(\gamma+1) \int_{\D} \frac{(1-|w|^2)^{\gamma}f(w)}{(1-z\overline{w})^{2+\gamma}}\ dA(w)
	\end{align*}	
	is a bounded projection mapping $L^{q,p}$ onto $H^{q,p}$, where $L^{q,p}$ is the corresponding space of equivalence classes of measurable functions. In particular ($\gamma=0$), we have the Bergman projection $P$ maps  $L^{q,p}$ onto $H^{q,p}$, when $1<p<+\infty$.
\end{proposition}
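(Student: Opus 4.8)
The plan is to split the statement into two logically separate assertions: the boundedness of $P_\gamma$ on $L^{q,p}$, and the reproducing identity $P_\gamma g=g$ for every analytic $g\in H^{q,p}$, the latter yielding at once idempotency ($P_\gamma^2=P_\gamma$) and surjectivity onto $H^{q,p}$. The structural observation that drives everything is that, writing $z=\rho e^{i\theta}$ and $w=se^{i\varphi}$, the measure $dA(w)$ is a constant multiple of $s\,ds\,d\varphi$ and the kernel depends on the two angles only through $\theta-\varphi$. Thus, for fixed $\rho$ and $s$, the inner integral in $\varphi$ is a convolution on the circle $\T$ of $\varphi\mapsto f(se^{i\varphi})$ against $\psi\mapsto (1-s^2)^\gamma(1-\rho s e^{i\psi})^{-(2+\gamma)}$; this decouples the angular and radial directions.

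First I would prove boundedness, treating the main range $1<p<+\infty$. Applying Minkowski's integral inequality in the radial variable $s$ and Young's convolution inequality on $\T$ in the angular variable gives
\[
\left\|P_\gamma f(\rho e^{i\cdot})\right\|_{L^q(\T)}\lesssim\int_0^1 s\,(1-s^2)^\gamma\left(\int_0^{2\pi}\frac{d\psi}{|1-\rho s e^{i\psi}|^{2+\gamma}}\right)\left\|f(se^{i\cdot})\right\|_{L^q(\T)}\,ds.
\]
The classical estimate $\int_0^{2\pi}|1-re^{i\psi}|^{-c}\,d\psi\asymp(1-r)^{1-c}$ for $c>1$, used with $r=\rho s$ and $c=2+\gamma$, shows the $L^1(\T)$-norm of the convolution kernel is $\asymp(1-s)^\gamma(1-\rho s)^{-(1+\gamma)}$. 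Hence, writing $F(t)=\|f(te^{i\cdot})\|_{L^q(\T)}$ so that $\|f\|_{L^{q,p}}=\|F\|_{L^p(0,1)}$, the problem reduces to the boundedness on $L^p(0,1)$ of the positive one-dimensional operator with kernel $T(\rho,s)\asymp(1-s)^\gamma(1-\rho s)^{-(1+\gamma)}$. I would settle this by Schur's test with the power weight $h(t)=(1-t)^{-a}$: the Forelli--Rudin asymptotics $\int_0^1(1-s)^b(1-\rho s)^{-c}\,ds\asymp(1-\rho)^{b+1-c}$, valid for $-1<b<c-1$, make both Schur inequalities close exactly when $0<a<\min\{1/p,(\gamma+1)/p'\}$ (with $p'$ the conjugate exponent), a nonempty range since $\gamma>-1$. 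The endpoint $p=1$ falls outside this scheme, but there $L^{q,1}$-boundedness follows directly from Fubini together with the same kernel estimate, the relevant quantity $\sup_s\int_0^1 T(\rho,s)\,d\rho$ being finite precisely when $\gamma>0$, i.e. when $1/p<\gamma+1$.

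Next I would establish the projection property. Because the kernel is analytic in $z$, $P_\gamma f\in\mathcal{H}(\D)$, and the boundedness just proved places $P_\gamma f$ in $L^{q,p}$, hence in $H^{q,p}$. It then suffices to prove $P_\gamma g=g$ for analytic $g\in H^{q,p}$. The reproducing formula for the weighted Bergman kernel is valid once $g$ lies in the weighted Bergman space $A^1_\gamma$, and this is exactly where the hypothesis is consumed: setting $G(s)=\|g(se^{i\cdot})\|_{L^q(\T)}$ and using Hölder's inequality in the radial variable,
\[
\int_\D|g(w)|(1-|w|^2)^\gamma\,dA(w)\lesssim\left(\int_0^1 G(s)^p\,ds\right)^{1/p}\left(\int_0^1(1-s)^{\gamma p'}\,ds\right)^{1/p'}\lesssim\|g\|_{H^{q,p}},
\]
the second factor being finite precisely when $\gamma p'>-1$, that is when $1/p<\gamma+1$.

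The hardest part is the decoupling step and the verification that, after the angular kernel estimate, one is genuinely left with a Forelli--Rudin kernel amenable to Schur's test: Young's inequality must be applied on $\T$ uniformly in the two radii while Minkowski's integral inequality absorbs the radial integration, and the Schur exponent must be chosen so that the two test inequalities close simultaneously. The admissible interval $0<a<\min\{1/p,(\gamma+1)/p'\}$ is exactly what the exponents of $T(\rho,s)$ permit, so for $1<p<+\infty$ no restriction beyond $\gamma>-1$ is forced by boundedness alone; the hypothesis $1/p<\gamma+1$ is used in full for the reproducing identity and, additionally, for boundedness at the endpoint $p=1$.
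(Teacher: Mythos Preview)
The paper does not prove this proposition; it is quoted from \cite[Corollary 7.3.4]{vukotic_multiplier_2016} and used as a black box, so there is no in-paper argument to compare against. Your outline is the standard route (and essentially the one in the cited reference): exploit the angular convolution structure via Young's inequality on $\T$, reduce to a one-variable positive operator in the radial direction, and close with Schur's test using the Forelli--Rudin asymptotics; the reproducing identity then follows from the embedding $H^{q,p}\hookrightarrow A^1_\gamma$.

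One point, however, needs correction. In the second Schur inequality you must control
\[
(1-s)^{\gamma}\int_0^1(1-\rho)^{-ap}(1-\rho s)^{-(1+\gamma)}\,d\rho,
\]
and the Forelli--Rudin estimate gives the desired bound $(1-s)^{-ap}$ only when $\gamma+ap>0$; if $\gamma+ap\le 0$ the integral stays bounded but the prefactor $(1-s)^{\gamma}$ then fails to be $\lesssim(1-s)^{-ap}$ as $s\to 1^-$. Thus the admissible range is
\[
\max\bigl(0,\,-\gamma/p\bigr)<a<\min\bigl(1/p,\,(\gamma+1)/p'\bigr),
\]
and for $\gamma<0$ the condition that this interval be nonempty is exactly $-\gamma/p<(\gamma+1)/p'$, which rearranges to $1/p<\gamma+1$. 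So, contrary to your closing remark, the hypothesis $1/p<\gamma+1$ is already required for boundedness whenever $\gamma<0$, not only for the reproducing identity or for the endpoint $p=1$. With this amendment to the Schur range your argument is complete.
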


%In order to study the duality of $H^{q,p}$, it will be useful to define the space $W_{p,q}$, that is, the space formed by the measurable functions in the product space \break ${\left([0,2\pi)\times [0,1), \mathcal{B}_1\otimes \mathcal{B}_2 ,\frac{d\theta}{\pi}\ rdr\right)}$, where $\mathcal{B}_1$ (and $\mathcal{B}_2$) is the Borel $\sigma$-algebra of $[0,2\pi)$ (and $[0,1)$ respectively), such that $\|f\|_{W_{p,q}}=\|f\|_{L^{(q,p)}([0,2\pi)\times [0,1))}<+\infty$ (see \cite{benedek_panzone_1962}).\newline

The proof of the next result follows the same scheme as for the spaces $RM(p,q)$   \cite[Corollary 4.8, p. 29]{Aguilar-Contreras-Piazza_1}.

\begin{proposition}\label{dualmixednormspace}
	Let $1<p,q<+\infty$. Then $(H^{q,p})^{\ast} \cong H^{q',p'}$, where $\frac{1}{p}
	+\frac{1}{p'}=1$ and $\frac{1}{q}
	+\frac{1}{q'}=1$. The antiisomorphism between $H^{q',p'}$ and $(H^{q,p})^{\ast}$  is given by the operator 
	$$g\mapsto \lambda_g$$
	where $\lambda_g$ is defined by 
	\begin{align*}
	\lambda_{g}(f)=\int_{\D} f(z)\overline{g(z)}\ dA(z), \quad f\in H^{q,p}. 
	\end{align*}
\end{proposition}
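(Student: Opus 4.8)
The plan is to run the classical three-step duality argument, using Proposition \ref{vukotic} exactly where the reproducing/projection structure is needed. First I would check that $g\mapsto\lambda_g$ maps $H^{q',p'}$ \emph{into} $(H^{q,p})^{\ast}$ boundedly. Writing $dA=\tfrac{1}{\pi}\,r\,dr\,d\theta$ and applying Hölder's inequality first in $\theta$ (exponents $q,q'$) and then in $r$ (exponents $p,p'$), together with $r\le 1$ on $\D$, one gets
\[
|\lambda_g(f)|\le C\,\|f\|_{H^{q,p}}\,\|g\|_{H^{q',p'}},
\]
so that $\|\lambda_g\|_{(H^{q,p})^{\ast}}\lesssim\|g\|_{H^{q',p'}}$; this is just the mixed-norm Hölder inequality. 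The conjugate-linearity of $g\mapsto\lambda_g$ is immediate (hence the map is an \emph{anti}homomorphism), and injectivity follows by testing $\lambda_g$ against the monomials $z^n$, which forces every Taylor coefficient of $g$ to vanish.

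The substantial step is surjectivity together with the reverse norm bound. Given $\Lambda\in(H^{q,p})^{\ast}$, I would view $H^{q,p}$ as a closed subspace of the mixed-norm Lebesgue space $L^{q,p}$ and extend $\Lambda$ by Hahn--Banach to a functional on $L^{q,p}$ of the same norm. By the known duality $(L^{q,p})^{\ast}\cong L^{q',p'}$ for mixed-norm Lebesgue spaces, there is $h\in L^{q',p'}$ with $\|h\|_{L^{q',p'}}=\|\Lambda\|$ and
\[
\Lambda(f)=\int_{\D} f(z)\,\overline{h(z)}\,dA(z),\qquad f\in H^{q,p}.
\]

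Next I would transfer the representing function into the analytic space via the Bergman projection $P=P_0$ of Proposition \ref{vukotic}. For analytic $f\in H^{q,p}$ the reproducing identity $Pf=f$ holds, and since the kernel $K(z,w)=(1-z\overline{w})^{-2}$ is Hermitian, $P$ is self-adjoint for the pairing $\int_{\D}\cdot\,\overline{\cdot}\,dA$. Interchanging the order of integration then gives
\[
\Lambda(f)=\int_{\D}(Pf)\,\overline{h}\,dA=\int_{\D} f\,\overline{Ph}\,dA=\lambda_{g}(f),\qquad g:=Ph.
\]
Because $1<p'<\infty$, Proposition \ref{vukotic} applied with $(p,q)$ replaced by $(p',q')$ shows that $P$ maps $L^{q',p'}$ boundedly onto $H^{q',p'}$; hence $g=Ph\in H^{q',p'}$ with $\|g\|_{H^{q',p'}}\lesssim\|h\|_{L^{q',p'}}=\|\Lambda\|=\|\lambda_g\|$. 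This yields both surjectivity and the estimate $\|g\|_{H^{q',p'}}\lesssim\|\lambda_g\|$, which combined with the first step gives $\|g\|_{H^{q',p'}}\asymp\|\lambda_g\|_{(H^{q,p})^{\ast}}$ and therefore the claimed antiisomorphism.

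The step I expect to be the main obstacle is the self-adjointness manipulation $\int(Pf)\,\overline{h}\,dA=\int f\,\overline{Ph}\,dA$: one must justify Fubini to interchange the $z$- and $w$-integrations, i.e.\ verify absolute integrability of $\int_{\D}\int_{\D}|f(z)|\,|K(z,w)|\,|h(w)|\,dA(w)\,dA(z)$, and one must confirm that the reproducing property $Pf=f$ holds for \emph{every} $f\in H^{q,p}$ rather than only on a dense subclass. Both are controlled by the mapping estimates in Proposition \ref{vukotic} together with a density argument with polynomials, but they constitute the technical heart of the argument.
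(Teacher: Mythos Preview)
Your argument is correct and is exactly the ``standard scheme'' the paper invokes: the paper does not spell out a proof of Proposition~\ref{dualmixednormspace} but only remarks that it follows as in \cite[Corollary~4.8]{Aguilar-Contreras-Piazza_1} via the boundedness of the Bergman projection (Proposition~\ref{vukotic}), which is precisely the Hahn--Banach\,/\,$L^{q,p}$-duality\,/\,project-with-$P$ route you outline. The technical points you flag (Fubini for the self-adjointness of $P$ and $Pf=f$ on all of $H^{q,p}$) are the expected checks and are handled by the boundedness in Proposition~\ref{vukotic} together with density of polynomials, so there is no gap.
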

Now we are ready to prove Theorem \ref{main}.

%\begin{theorem} Let $1< p,q< +\infty$.
%	\begin{enumerate}
%		\item[a)] If $p>q$, then $RM(p,q)\varsubsetneq H^{q,p,1/p}$.
%		\item[b)] If $q>p$, then $H^{q,p,1/p} \varsubsetneq RM(p,q)$.
%	\end{enumerate}
%\end{theorem}

\begin{proof} [Proof of Theorem \ref{main}]
	\textit{a)} If $p\geq q$ then $RM(p,q)\subset H^{q,p}$, because using Minkowski's integral inequality we have 
	\begin{align*}
	\left(\int_{0}^{1} \left(\int_{0}^{2\pi} |f(re^{i\theta})|^{q}\ \frac{d\theta}{2\pi}\right)^{p/q}\ dr\right)^{1/p}\leq \left(\int_{0}^{2\pi}\left(\int_{0}^{1}|f(re^{i\theta})|^{p}\ dr\right)^{q/p}\ \frac{d\theta}{2\pi}\right)^{1/q}.
	\end{align*}
	
	Let us see that $RM(p,q)\varsubsetneq H^{q,p}$ if $p>q$.
	
	Consider the functions $$u_{\delta}(z)=\frac{\delta}{(1+\delta-z)^{1+\frac{1}{p}+\frac{1}{q}}},\quad z\in\D,$$
	where $0<\delta<1/2$. One can see that $\|u_\delta\|_{H^{q,p}}\asymp \rho_{p,q}(u_\delta)\asymp 1$, because, for $\alpha\in \D$,
	$$\|(1-\overline{\alpha}z)^{-1-\frac{1}{p}-\frac{1}{q}}\|_{H^{q,p}}\asymp (1-|\alpha|)^{-1}$$
	(see Proposition~\ref{testfunction2}) and  $$\rho_{p,q}((1-\overline{\alpha}z)^{-1-\frac{1}{p}-\frac{1}{q}})\asymp (1-|\alpha|)^{-1}$$ 
	(see Proposition~\ref{testfunction1}).
	
	Let $\{\delta_n\}$ be a sequence of positive numbers such that $n^2\delta_n<\frac{1}{4}$ for all $n\geq 1$, $\delta_{n}/n^{2p}>(n+1)\delta_{n+1}^{1/2}$ for all $n\geq 1$, and $\sum_{j=1}^{\infty} j^2\delta_j<2$. Define the sets
	$$A_{n}:=\left\{ re^{i\theta} :|\theta-\theta_{n}|\leq n^{2}\delta_n\ ,\ r\in[1-n\delta_n^{1/2},1-\delta_n/n^{2p}]\right\},$$
	where $\theta_{1}=\delta_1$  and $\theta_{n}-\theta_{n-1}=\frac{1}{n^2}+(n-1)^2\delta_{n-1}+n^2\delta_n$, for $n\geq 2$. Observe that $A_{n}=\{re^{i\theta}\ :\ r\in I_{n},\ \theta\in J_{n}\}$ where the sets $\{I_n\}$ are pairwise disjoint and so are the sets $\{J_n\}$.
	
	Let us check that $\rho_{p,q}(u_{\delta_n}(ze^{-i\theta_{n}})\chi_{\D\setminus A_n}(z))\lesssim \frac{1}{n^2}$. Firstly, notice that
	\begin{align}\label{eqp1mn}
	&\rho_{p,q}\left(u_{\delta_n}(ze^{-i\theta_{n}})\chi_{\{0<|w|<1-n\delta_n^{1/2}\}}(z)\right)\\
	&\quad=\left(\int_{0}^{2\pi} \left(\int_{0}^{1-n\delta_n^{1/2}}\frac{\delta_n^p}{|1+\delta_n-re^{i\theta}|^{p\left(1+\frac{1}{p}+\frac{1}{q}\right)}}   \ dr \right)^{q/p}\frac{d\theta}{2\pi}\right)^{1/q}\nonumber\\
	&\quad \leq  \left(\int_{0}^{1-n\delta_n^{1/2}}\frac{\delta_n^p}{(1+\delta_n-r)^{p\left(1+\frac{1}{p}+\frac{1}{q}\right)}}   \ dr \right)^{1/p}\leq \frac{\delta_n}{(n\delta_n^{1/2}+\delta_n)^{1+\frac{1}{q}}}\nonumber\\
	&\quad\leq \frac{\delta_n}{(n\delta_n^{1/2})^{1+\frac{1}{q}}}\leq \frac{1}{n^{2}}.\nonumber
	\end{align} 
	
	In the next inequalities we use that, for $|\theta|\leq 1$ and $r\geq 1-n\delta_{n}^{1/2}$
	\begin{align*}
	|1+\delta_n-re^{i\theta}|^2&=(1+\delta_{n}-r)^2+2r(1+\delta_{n})(1-\cos(\theta))\geq(1+\delta_{n}-r)^{2}+\frac{r\theta^2}{2}\\
	&\geq (1+\delta_{n}-r)^{2}+(1-n\delta_{n}^{1/2})\frac{\theta^2}{2}\geq (1+\delta_{n}-r)^{2}+\frac{\theta^2}{4}.
	\end{align*}
	%\allowdisplaybreaks
	Thus, it follows 
	\begin{align*}
	&\rho_{p,q}^{q}\left(u_{\delta_n}(ze^{-i\theta_{n}})\chi_{\{ re^{i\theta}\ :\ 1-n\delta_n^{1/2}<r<1,\ |\theta-\theta_{n}|> n^2\delta_n \}}(z)\right)\\
	&\quad= 2\int_{n^2\delta_n}^{\pi}\left(\int_{1-n \delta_n^{1/2}}^{1}\frac{\delta_n^p}{|1+\delta_n-re^{i\theta}|^{p\left(1+\frac{1}{p}+\frac{1}{q}\right)}}   \ dr \right)^{q/p} \frac{d\theta}{2\pi}\nonumber\\
	&\quad\leq 2\left(\frac{\pi-n^2\delta_{n}}{1-n^2\delta_{n}}\right)\int_{n^2\delta_n}^{1}\left(\int_{1-n \delta_n^{1/2}}^{1}\frac{\delta_n^p}{|1+\delta_n-re^{i\theta}|^{p\left(1+\frac{1}{p}+\frac{1}{q}\right)}}   \ dr \right)^{q/p} \frac{d\theta}{2\pi},\nonumber
	\end{align*}
	since the inner integral is a decreasing function in $\theta$. Therefore,
	\begin{align}\label{eqp2mn}
	&\rho_{p,q}^{q}\left(u_{\delta_n}(ze^{-i\theta_{n}})\chi_{\{ re^{i\theta}\ :\ 1-n\delta_n^{1/2}<r<1,\ |\theta-\theta_{n}|> n^2\delta_n \}}(z)\right) \\
	&\quad\leq 2^{3q+1}  \int_{n^2\delta_n}^{n^2\delta_n + n\delta_n^{1/2}}\left(\int_{1-n \delta_n^{1/2}}^{1+\delta_n-\theta}\frac{\delta_n^p}{(1+\delta_n-r)^{p\left(1+\frac{1}{p}+\frac{1}{q}\right)}}   \ dr\right.\nonumber\\
	&\qquad+\left.\int_{1+\delta_n-\theta}^{1}\frac{\delta_n^p}{\theta^{p\left(1+\frac{1}{p}+\frac{1}{q}\right)}}   \ dr \right)^{q/p} d\theta\nonumber\\
	&\qquad+2^{3q+1} \int_{n^2\delta_n + n\delta_{n}^{1/2}}^{1} \left(\int_{1-n \delta_n^{1/2}}^{1}\frac{\delta_n^p}{\theta^{p\left(1+\frac{1}{p}+\frac{1}{q}\right)}}   \ dr \right)^{q/p} \ d\theta\nonumber \\
	&\quad\leq 2^{3q+1} \int_{n^2\delta_n}^{n^2\delta_n + n\delta_n^{1/2}}\left(\frac{1}{p\left(1+\frac{1}{q}\right)} \frac{\delta_n^p}{\theta^{p\left(1+\frac{1}{q}\right)}}   +\frac{\delta_n^p}{\theta^{p\left(1+\frac{1}{q}\right)}} \right)^{q/p} d\theta\nonumber\\
	&\qquad+2^{3q+1} \int_{n^2\delta_n + n\delta_{n}^{1/2}}^{1} \frac{\delta_n^{q+\frac{q}{2p}} n^{q/p}}{\theta^{q\left(1+\frac{1}{p}+\frac{1}{q}\right)}} \ d\theta\nonumber \\
	&\quad\leq 2^{3q+1}  2^{q/p}\delta_n^{q} \int_{n^2\delta_n}^{n^2\delta_n + n\delta_n^{1/2}} \frac{1}{\theta^{q\left(1+\frac{1}{q}\right)}}+2^{3q+1} \delta_n^{q} \int_{n^2\delta_n + n\delta_n^{1/2}}^{1} \frac{1}{\theta^{q\left(1+\frac{1}{q}\right)}} \ d\theta\nonumber\\
	&\quad\leq 2^{3q+1+\frac{q}{p}}    \delta_n^{q}\int_{n^2\delta_n}^{1} \frac{1}{\theta^{q\left(1+\frac{1}{q}\right)}} \ d\theta\leq  2^{3q+1+\frac{q}{p}}   \frac{\delta_n^{q}}{(n^2\delta_n)^q}\leq 2^{3q+2} \frac{1}{n^{2q}}.\nonumber
	\end{align}

	Similarly, we obtain	
	\begin{align}\label{eqp3mn}
	&\rho_{p,q}^{q}\left(u_{\delta_n}(ze^{-i\theta_{n}})\chi_{\{ re^{i\theta}\ :\  1-\frac{\delta_n}{n^{2p}}<r<1,\ |\theta-\theta_{n}|<n^2\delta_n\}}(z)\right)\\
	&\quad=2\int_{0}^{n^2\delta_n} \left(\int_{1-\frac{\delta_n}{n^{2p}}}^{1}\frac{\delta_n^p}{|1+\delta_n-re^{i\theta}|^{p\left(1+\frac{1}{p}+\frac{1}{q}\right)}}   \ dr \right)^{q/p}\ \frac{d\theta}{2\pi}\nonumber\\
	&\quad\leq 2\int_{0}^{\delta_n} \left(\int_{1-\frac{\delta_n}{n^{2p}}}^{1}\frac{\delta_n^p}{(1+\delta_n-r)^{p\left(1+\frac{1}{p}+\frac{1}{q}\right)}}   \ dr \right)^{q/p}\ \frac{d\theta}{2\pi}\nonumber\\
	&\qquad+2^{3q+1}\int_{\delta_n}^{n^2\delta_n} \left(\int_{1-\frac{\delta_n}{n^{2p}}}^{1}\frac{\delta_n^p}{\theta^{p\left(1+\frac{1}{p}+\frac{1}{q}\right)}}   \ dr \right)^{q/p}\ \frac{d\theta}{2\pi}\nonumber\\
	&\quad\leq  2\delta_n^q  \left(\frac{\delta_n}{n^{2p}} \right)^{q/p}\int_{0}^{\delta_n} \frac{1}{\delta_n^{q\left(1+\frac{1}{p}+\frac{1}{q}\right)}} \frac{d\theta}{2\pi}+2^{3q+1}\delta_n^q  \left(\frac{\delta_n}{n^{2p}} \right)^{q/p}\int_{\delta_n}^{n^2\delta_n} \frac{1}{\theta^{q\left(1+\frac{1}{p}+\frac{1}{q}\right)}} \ \frac{d\theta}{2\pi}\nonumber.   \\
	&\quad\leq \frac{8^{q+1}}{2\pi}\delta_n^q  \left(\frac{\delta_n}{n^{2p}} \right)^{q/p} \frac{1}{\delta_n^{q\left(1+\frac{1}{p}\right)}}=\frac{8^{q+1}}{2\pi}\frac{1}{n^{2q}}.\nonumber
	\end{align}
	Using inequalities \eqref{eqp1mn}, \eqref{eqp2mn}, and \eqref{eqp3mn} we deduce $$\|u_{\delta_n}(ze^{-i\theta_{n}})\chi_{\D\setminus A_n}(z)\|_{H^{q,p}}\leq \rho_{p,q}(u_{\delta_n}(ze^{-i\theta_{n}})\chi_{\D\setminus A_n}(z))\lesssim \frac{1}{n^2}.$$
	
	Now, by the very definition of $A_n$, the sets $\{A_{n}\}$ are pairwise disjoint. Define the functions $g_n(z)=u_{\delta_{n}}(ze^{-i\theta_{n}}) \chi_{\D\setminus A_{n}}(z)$ and $f_n(z)=u_{\delta_{n}}(ze^{-i\theta_{n}}) \chi_{A_{n}}(z)$ such that $u_{\delta_{n}}(ze^{-i\theta_{n}})=f_{n}(z)+g_{n}(z)$. As we have seen, we have that $\rho_{p,q}(g_n)\lesssim \frac{1}{n^2}$ and $\|g_n\|_{H^{q,p}}\lesssim \frac{1}{n^2}$  for $n\in\N$. In addition, one can see that $\rho_{p,q}(f_n)\asymp\|f_n\|_{H^{q,p}}\asymp 1$ because $\rho_{p,q}(u_{\delta_{n}}(ze^{-i\theta_{n}}))\asymp \|u_{\delta_{n}}(ze^{-i\theta_{n}})\|_{H^{q,p}}\asymp 1$.

	Given a measure space $(\Omega, \mathcal{A},\mu)$. We have that for any sequence of measurable functions $h_{n}: \Omega \rightarrow \C$ whose supports are pairwise disjoint, it holds that
	$$
	\int_{\Omega} \bigg|\sum_{n} h_{n}(w)\bigg|^{s}\ d\mu(w)=\sum_{n}\int_{\Omega} |h_{n}(w)|^{s}\ d\mu(w)
	$$
	for all $s>0$. Then, using this fact twice (one in each variable, first with $s=q$ and then with $s=p/q$), we obtain
	\begin{align*}
	&\left\|\sum \alpha_n f_n\right\|_{H^{q,p}}=\left(\int_{0}^{1}\sum |\alpha_n|^{p} \left(\int_{0}^{2\pi}\left| f_n(re^{i\theta})\right|^{q}\frac{d\theta}{2\pi}\right)^{p/q}\ dr\right)^{1/p}\\
	&\quad=\left(\sum |\alpha_n|^{p} \|f_n\|_{H^{q,p}}^{p}\right)^{1/p}\asymp \left(\sum |\alpha_n|^{p}\right)^{1/p}.
	\end{align*}
	By the same reason,
	\begin{align*}
	&\rho_{p,q}\left(\sum \alpha_n f_n\right)=\left(\int_{0}^{2\pi} \sum |\alpha_n|^q\left(\int_{0}^{1}\left| f_n(re^{i\theta})\right|^{p}\ dr\right)^{q/p}\frac{d\theta}{2\pi}\right)^{1/q}\\
	&\quad=\left(\sum |\alpha_n|^{q} \rho_{p,q}^{q}(f_n)\right)^{1/q}\asymp \left(\sum |\alpha_n|^{q}\right)^{1/q}.
	\end{align*}
	Hence, if we consider the function $F_{m}(z):=\sum_{n=1}^{m} u_{\delta_{n}}(ze^{-i\theta_{n}})$ we obtain that
	\begin{align*}
	\rho_{p,q}(F_m)&\leq \rho_{p,q}\left(\sum_{n=1}^{m} f_n\right)+\rho_{p,q}\left(\sum_{n=1}^{m} g_n\right)\lesssim m^{1/q}+\sum_{n=1}^{m}\frac{1}{n^2}\\
	&\leq m^{1/q} + \frac{\pi^2}{6}\leq \left(1+\frac{\pi^2}{6}\right)m^{1/q}
	\end{align*}
	and 
	\begin{align*}
	\rho_{p,q}(F_m)\geq \rho_{p,q}\left(\sum_{n=1}^{m} f_n \right)-\rho_{p,q}\left(\sum_{n=1}^{m} g_n\right)\gtrsim m^{1/q}
	\end{align*}
	for $m$ big enough. So that $\rho_{p,q}(F_m)\asymp m^{1/q}$. In the same way for the norm in $H^{q,p}$, it follows that $\|F_m\|_{H^{q,p}}\asymp m^{1/p}$ using the same argument.
	
	Therefore, if it were true that $RM(p,q)=H^{q,p}$, then we would have $\rho_{p,q}(F_m)\asymp \|F_m\|_{H^{q,p}}$. But if $p>q$ this is impossible, because this implies that $m^{1/p}\asymp m^{1/q}$ for all $m\in\N$. Thus, we conclude \textit{a)}. \newline
	
	\textit{b)} If $q\geq p$ then $ H^{q,p}\subset RM(p,q)$, using Minkowski's integral inequality  we have 
	\begin{align*}
	\left(\int_{0}^{2\pi}\left(\int_{0}^{1}|f(re^{i\theta})|^{p}\ dr\right)^{q/p}\ \frac{d\theta}{2\pi}\right)^{1/q}\leq \left(\int_{0}^{1} \left(\int_{0}^{2\pi} |f(re^{i\theta})|^{q}\ \frac{d\theta}{2\pi}\right)^{p/q}\ dr\right)^{1/p}.
	\end{align*}
	Let us see that $H^{q,p}\neq RM(p,q)$ for $q> p$. Assume that $H^{q,p}= RM(p,q)$ then $(H^{q,p})^{\ast}= (RM(p,q))^{\ast}$. By \cite[Corollary 4.8, p. 29]{Aguilar-Contreras-Piazza_1} and Proposition~\ref{dualmixednormspace}, we have that $H^{q',p'}= RM(p',q')$ for $p'>q'$. But this contradicts \textit{a)}. So \textit{b)} holds and we are done.
\end{proof}

%------------------------------------------------------------------------------------

%\bibliographystyle{plain} 
%\bibliographystyle{unsrt} 
%\bibliographystyle{acm} 
%\bibliographystyle{amsalpha} 
%\bibliographystyle{plainnat}
%\bibliography{RMpq}

\end{document}